\documentclass[11pt,a4paper]{article}

\usepackage{inputenc}
\usepackage{amsmath}
\usepackage{bm}
\usepackage{bbold}
\usepackage{amsthm}
\usepackage{enumerate}

\usepackage{hyperref}

\setlength{\mathsurround}{1pt}

\title{Direct Solutions to Tropical Optimization Problems with Nonlinear Objective Functions and Boundary Constraints\thanks{Mathematical Methods and Optimization Techniques in Engineering: Proc. 1st Intern. Conf. on Optimization Techniques in Engineering (OTENG '13), WSEAS Press, 2013, pp.~86--91.}}

\author{Nikolai Krivulin\thanks{Faculty of Mathematics and Mechanics, Saint Petersburg State University, 28 Universitetsky Ave., Saint Petersburg, 198504, Russia, 
nkk\textless at\textgreater math.spbu.ru.}\thanks{The work was supported in part by the Russian Foundation for Humanities under Grant \#13-02-00338.}
\and
Karel Zimmermann\thanks{Faculty of Mathematics and Physics, Charles University, 25~Malostranske nam., 118 00 Prague 1, Czech Republic, Karel.Zimmermann\textless at\textgreater mff.cuni.cz}
}

\date{}

\newtheorem{theorem}{Theorem}
\newtheorem{lemma}[theorem]{Lemma}

\setlength{\unitlength}{1mm}


\begin{document}

\maketitle

\begin{abstract}
We examine two multidimensional optimization problems that are formulated in terms of tropical mathematics. The problems are to minimize nonlinear objective functions, which are defined through the multiplicative conjugate vector transposition on vectors of a finite-dimensional semimodule over an idempotent semifield, and subject to boundary constraints. The solution approach is implemented, which involves the derivation of the sharp bounds on the objective functions, followed by determination of vectors that yield the bound. Based on the approach, direct solutions to the problems are obtained in a compact vector form. To illustrate, we apply the results to solving constrained Chebyshev approximation and location problems, and give numerical examples.
\\

\textbf{Key-Words:} idempotent semifield, tropical optimization problem, boundary constraint, chebyshev location problem, chebyshev approximation.
\\

\textbf{MSC (2010):} 65K10, 15A80, 65K05, 90B85, 41A50
\end{abstract}

\section{Introduction}

Since the early publications in 1960s, tropical (idempotent) mathematics, as the mathematics of idempotent semirings, has found many applications in optimization, control, decision making, and other fields. Over these decades, the theory and practice of idempotent mathematics has been developed in many publications, including \cite{Cuninghamegreen1979Minimax,Zimmermann1981Linear,Baccelli1993Synchronization,Deschutter1996Maxalgebraic,Kolokoltsov1997Idempotent,Golan2003Semirings,Heidergott2006Maxplus,Akian2007Maxplus,Litvinov2007Themaslov,Gondran2008Graphs,Butkovic2010Maxlinear}.

In the literature, there is a range of real-world optimization problems that can be formulated and solved in the tropical mathematics setting to minimize linear and nonlinear objective functions defined on finite-dimensional semimodules over idempotent semifields. Well-known examples include multidimensional problems that arise in job scheduling \cite{Cuninghamegreen1976Projections,Cuninghamegreen1979Minimax,Zimmermann1981Linear,Zimmermann1984Some,Zimmermann2003Disjunctive,Zimmermann2006Interval,Butkovic2010Maxlinear} and location analysis \cite{Zimmermann1984Onmaxseparable,Cuninghamegreen1994Minimax,Krivulin2011Analgebraic,Krivulin2012Anew}.

Many available solution techniques apply iterative computational schemes and provide only particular solutions for the problems \cite{Zimmermann1984Some,Zimmermann1984Onmaxseparable,Zimmermann2003Disjunctive,Zimmermann2006Interval,Butkovic2010Maxlinear}. These techniques are based on numerical algorithms, which produce a solution if any solution exists, or indicate that there is no solution. Other approaches offer direct explicit solutions and, in some cases, can give complete solutions \cite{Cuninghamegreen1976Projections,Cuninghamegreen1979Minimax,Zimmermann1981Linear,Krivulin2011Analgebraic,Krivulin2012Anew}.

In this paper, we consider multidimensional tropical optimization problems with nonlinear objective functions defined through the multiplicative conjugate vector transposition, and with boundary constraints. As the starting point, we take the problem with two-sided boundary constraints, which was first examined and solved with a numerical algorithm in \cite{Zimmermann1984Some}. We consider two particular cases of the problem and obtain direct solutions in a compact vector form. For one of the problems, we offer a complete solution.

We follow a solution approach that is based on the application and further development of the technique, which was proposed in \cite{Krivulin2005Onsolution,Krivulin2009Methods,Krivulin2013Solution}. The technique involves the derivation of the sharp bounds on the objective functions in the problems, followed by determination of vectors that yield the bounds.

The rest of the paper is as follows. We give a short concise overview of the notation and preliminary results in Section~\ref{S-NPR}. Furthermore, in Section~\ref{S-OP}, we outline a class of tropical optimization problems of interest. Section~\ref{S-CP} presents the problems to be solved and then gives direct solutions. Finally, application and numerical examples are discussed in Section~\ref{S-AE}.

\section{Notation and Preliminary Results}
\label{S-NPR}

We start with an overview of notation and results of idempotent algebra to provide an appropriate framework for the analysis of tropical optimization problems to be performed below. The overview primarily follows the presentation of the topic in \cite{Krivulin2005Onsolution,Krivulin2009Methods,Krivulin2013Solution}. For both introductory and advanced material one can consult \cite{Cuninghamegreen1979Minimax,Zimmermann1981Linear,Baccelli1993Synchronization,Deschutter1996Maxalgebraic,Kolokoltsov1997Idempotent,Golan2003Semirings,Heidergott2006Maxplus,Akian2007Maxplus,Litvinov2007Themaslov,Gondran2008Graphs,Butkovic2010Maxlinear} as well. 

\subsection{Idempotent Semifield}

Consider an idempotent semifield $\langle\mathbb{X},\mathbb{0},\mathbb{1},\oplus,\otimes\rangle$, where $\mathbb{X}$ is a carrier set that is closed under addition $\oplus$ and multiplication $\otimes$, and contains the zero $\mathbb{0}$ and the identity $\mathbb{1}$. Addition is idempotent, which implies that $x\oplus x=x$ for all $x\in\mathbb{X}$. In the semifield, for each $x\in\mathbb{X}_{+}$, where $\mathbb{X}_{+}=\mathbb{X}\setminus\{\mathbb{0}\}$, there exists an inverse $x^{-1}$ such that $x^{-1}\otimes x=\mathbb{1}$.

For each $x\in\mathbb{X}_{+}$ and any integer $p\geq0$, exponential notation is routinely defined as follows: $x^{0}=\mathbb{1}$, $\mathbb{0}^{p}=\mathbb{0}$, $x^{p}=x^{p-1}\otimes x$, and $x^{-p}=(x^{-1})^{p}$. Moreover, the semifield is assumed algebraically closed (radicable), which means that the integer power is extendable to the case of rational exponents.

In what follows, we suppress the multiplication sign as in conventional algebra and use the exponential notation only in the above mentioned sense. 

There is a partial order, which is induced on the semifield by idempotent addition such that $x\leq y$ if and only if $x\oplus y=y$. The order is assumed extendable to a total order to make the semifield linearly ordered. Below, the relation symbols and the optimization objectives are considered in terms of this order. 

Addition and multiplication are monotone in each argument, which implies that the inequalities $x\leq u$ and $y\leq v$ involve $x\oplus y\leq u\oplus v$ and $xy\leq uv$.

As an illustration of the idempotent semifields under study, we suggest the real semifield
$$
\mathbb{R}_{\max,+}
=
\langle\mathbb{R}\cup\{-\infty\},-\infty,0,\max,+\rangle.
$$

This semifield is used later to provide application examples of tropical optimization problems.

\subsection{Matrix and Vector Algebra}

We consider matrices with entries from $\mathbb{X}$ and denote the set of matrices with $m$ rows and $n$ columns by $\mathbb{X}^{m\times n}$. For conforming any matrices $\bm{A}$, $\bm{B}$, $\bm{C}$, and scalar $x$, the sum $\bm{A}\oplus\bm{B}$ and the products $\bm{A}\bm{C}$ and $x\bm{A}$ are calculated by the usual rules with the scalar operations $\oplus$ and $\otimes$ in place of ordinary addition and multiplication. Clearly, these matrix operations are component-wise monotone in each argument.

A matrix is called row (column) regular, if it has no rows (columns) with all entries equal to $\mathbb{0}$. A matrix is regular, if it is both row and column regular.

A square matrix that has $\mathbb{1}$ on the diagonal and $\mathbb{0}$ elsewhere is the identity matrix, which is denoted $\bm{I}$.

Let $\mathbb{X}^{n}$ be the set of column vectors over $\mathbb{X}$ with $n$ elements. Vector addition and scalar multiplication are defined component-wise in terms of the scalar operations $\oplus$ and $\otimes$. Both vector operations are component-wise monotone in each argument.

A vector that consists entirely of $\mathbb{0}$ is the zero vector. A vector is regular, if it has no zero elements.

For any nonzero column vector $\bm{x}=(x_{i})$, the multiplicative conjugate transpose is a row vector $\bm{x}^{-}=(x_{i}^{-})$ with components $x_{i}^{-}=x_{i}^{-1}$ if $x_{i}\ne\mathbb{0}$, and $x_{i}^{-}=\mathbb{0}$ otherwise.

If both vectors $\bm{x}$ and $\bm{y}$ are regular, then the component-wise inequality $\bm{x}\leq\bm{y}$ implies the inequality $\bm{x}^{-}\geq\bm{y}^{-}$ and vice versa.

For any two regular vectors $\bm{x},\bm{y}\in\mathbb{X}^{n}$, we define the distance function
\begin{equation}
\rho(\bm{x},\bm{y})
=
\bm{y}^{-}\bm{x}\oplus\bm{x}^{-}\bm{y}.
\label{E-rhoxy}
\end{equation}

Note that, in terms of the semifield $\mathbb{R}_{\max,+}$, the function can be represented in the form
$$
\rho(\bm{x},\bm{y})
=
\max_{1\leq i\leq n}|y_{i}-x_{i}|,
$$
and thus coincides with the Chebyshev norm.

Finally, note that any nonzero column vector $\bm{x}$ satisfies the equality $\bm{x}^{-}\bm{x}=\mathbb{1}$. Moreover, it is not difficult to verify that the matrix inequality $\bm{x}\bm{x}^{-}\geq\bm{I}$ is valid for any regular column vector $\bm{x}$.

We use these facts to solve the following problem: given a matrix $\bm{A}\in\mathbb{X}^{m\times n}$ and a vector $\bm{p}\in\mathbb{X}^{m}$, find all regular vectors $\bm{x}\in\mathbb{X}^{n}$ to satisfy the inequality
\begin{equation}
\bm{A}\bm{x}
\leq
\bm{p}.
\label{I-Axp}
\end{equation}

\begin{lemma}
\label{L-xpA}
For any column-regular matrix $\bm{A}$ and regular vector $\bm{p}$, all regular solutions to \eqref{I-Axp} are given by
\begin{equation}
\bm{x}
\leq
(\bm{p}^{-}\bm{A})^{-}.
\label{I-xpA}
\end{equation}
\end{lemma}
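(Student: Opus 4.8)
The plan is to establish the claimed characterization by proving the two implications that together show $\bm{A}\bm{x}\leq\bm{p}$ is equivalent to $\bm{x}\leq(\bm{p}^{-}\bm{A})^{-}$ for regular $\bm{x}$. Throughout I would write $\bm{r}=\bm{p}^{-}\bm{A}$, a row vector, and first observe that $\bm{r}$ is regular: column-regularity of $\bm{A}$ guarantees each column has a nonzero entry, which paired with regularity (hence nonvanishing reciprocals) of $\bm{p}$ makes every component of $\bm{r}$ nonzero. This makes $\bm{r}^{-}=(\bm{p}^{-}\bm{A})^{-}$ a well-defined regular column vector and legitimizes the identities used below. The whole argument rests on the two facts recalled just before the lemma, namely $\bm{y}^{-}\bm{y}=\mathbb{1}$ for any nonzero vector and $\bm{y}\bm{y}^{-}\geq\bm{I}$ for any regular vector; these are applied in turn to $\bm{p}$ and to $\bm{r}^{-}$.

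For necessity, suppose $\bm{x}$ is regular and $\bm{A}\bm{x}\leq\bm{p}$. I would multiply this inequality on the left by the row vector $\bm{p}^{-}$ and invoke monotonicity of multiplication, together with $\bm{p}^{-}\bm{p}=\mathbb{1}$, to get the scalar inequality $\bm{r}\bm{x}=(\bm{p}^{-}\bm{A})\bm{x}\leq\mathbb{1}$. Multiplying once more on the left, this time by the column vector $\bm{r}^{-}$, and using associativity gives $(\bm{r}^{-}\bm{r})\bm{x}\leq\bm{r}^{-}$. Since $\bm{r}^{-}$ is a regular column vector, the inequality $\bm{r}^{-}\bm{r}=\bm{r}^{-}(\bm{r}^{-})^{-}\geq\bm{I}$ yields $\bm{x}\leq(\bm{r}^{-}\bm{r})\bm{x}\leq\bm{r}^{-}$, which is exactly the desired bound.

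For sufficiency, suppose $\bm{x}\leq(\bm{p}^{-}\bm{A})^{-}=\bm{r}^{-}$. Multiplying on the left by $\bm{A}$ and using monotonicity reduces the goal to the single inequality $\bm{A}\bm{r}^{-}=\bm{A}(\bm{p}^{-}\bm{A})^{-}\leq\bm{p}$, since then $\bm{A}\bm{x}\leq\bm{A}\bm{r}^{-}\leq\bm{p}$. To prove this residual inequality I would set $\bm{v}=\bm{A}(\bm{p}^{-}\bm{A})^{-}$ and first compute $\bm{p}^{-}\bm{v}=\bm{r}\bm{r}^{-}=\mathbb{1}$, using $\bm{r}\bm{r}^{-}=(\bm{r}^{-})^{-}\bm{r}^{-}=\mathbb{1}$. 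Multiplying $\bm{p}^{-}\bm{v}=\mathbb{1}$ on the left by $\bm{p}$ and applying $\bm{p}\bm{p}^{-}\geq\bm{I}$ then gives $\bm{v}\leq(\bm{p}\bm{p}^{-})\bm{v}=\bm{p}(\bm{p}^{-}\bm{v})=\bm{p}$, as required.

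The two directions are mirror images of one another, each combining one application of the exact identity $\bm{y}^{-}\bm{y}=\mathbb{1}$ with one application of the inequality $\bm{y}\bm{y}^{-}\geq\bm{I}$. I expect the main obstacle to be the sufficiency step, specifically verifying $\bm{A}(\bm{p}^{-}\bm{A})^{-}\leq\bm{p}$: unlike necessity, this is not a direct consequence of monotonicity but genuinely requires the outer-product inequality $\bm{p}\bm{p}^{-}\geq\bm{I}$ to absorb the matrix $\bm{A}$. The other point demanding care is the bookkeeping of the regularity hypotheses, since they are precisely what guarantees that $\bm{r}=\bm{p}^{-}\bm{A}$ is regular and that all conjugate transposes appearing in the argument are defined and satisfy $(\bm{y}^{-})^{-}=\bm{y}$.
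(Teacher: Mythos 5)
Your proposal is correct and takes essentially the same route as the paper: both directions rest on exactly the two facts $\bm{y}^{-}\bm{y}=\mathbb{1}$ and $\bm{y}\bm{y}^{-}\geq\bm{I}$, applied to $\bm{p}$ and to $\bm{r}=\bm{p}^{-}\bm{A}$, and your intermediate scalar steps are just an unpacked version of the paper's chained inequalities $\bm{x}\leq(\bm{p}^{-}\bm{A})^{-}\bm{p}^{-}\bm{A}\bm{x}\leq(\bm{p}^{-}\bm{A})^{-}\bm{p}^{-}\bm{p}=(\bm{p}^{-}\bm{A})^{-}$ and $\bm{A}\bm{x}\leq\bm{A}(\bm{p}^{-}\bm{A})^{-}\leq\bm{p}\bm{p}^{-}\bm{A}(\bm{p}^{-}\bm{A})^{-}=\bm{p}$. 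No gaps; your explicit check that $\bm{p}^{-}\bm{A}$ is regular is a detail the paper leaves implicit.
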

\begin{proof}
Let us verify that both inequalities \eqref{I-Axp} and \eqref{I-xpA} are equivalent. First, we multiply inequality \eqref{I-Axp} on the left by $(\bm{p}^{-}\bm{A})^{-}\bm{p}^{-}$, and then write
$$
\bm{x}
\leq
(\bm{p}^{-}\bm{A})^{-}\bm{p}^{-}\bm{A}\bm{x}
\leq
(\bm{p}^{-}\bm{A})^{-}\bm{p}^{-}\bm{p}
=
(\bm{p}^{-}\bm{A})^{-}
$$
to obtain inequality \eqref{I-xpA}. On the other hand, after left multiplication of \eqref{I-xpA} by the matrix $\bm{A}$, we have
$$
\bm{A}\bm{x}
\leq
\bm{A}(\bm{p}^{-}\bm{A})^{-}
\leq
\bm{p}\bm{p}^{-}\bm{A}(\bm{p}^{-}\bm{A})^{-}
=
\bm{p},
$$
which completes the proof.
\end{proof}

\section{Optimization Problems}
\label{S-OP}

We consider the tropical optimization problems with non-linear objective functions and linear constraints, which were apparently first examined in \cite{Cuninghamegreen1976Projections,Cuninghamegreen1979Minimax,Zimmermann1984Some}. The problems appeared in the analysis of the tropical vector equation $\bm{A}\bm{x}=\bm{p}$ and were motivated by real-world problems in job scheduling. Initially represented in somewhat different forms, the problems are written below in a unified way in terms of multiplicative conjugate transposition. 

Given a matrix $\bm{A}\in\mathbb{X}^{m\times n}$ and a vector $\bm{p}\in\mathbb{X}^{m}$, consider the problem of finding vectors $\bm{x}\in\mathbb{X}^{n}$ that
\begin{equation}
\begin{aligned}
&
\text{minimize}
&&
(\bm{A}\bm{x})^{-}\bm{p},
\\
&
\text{subject to}
&&
\bm{A}\bm{x}
\leq
\bm{p}.
\end{aligned}
\label{P-AxpAxp}
\end{equation}

In \cite{Cuninghamegreen1976Projections,Cuninghamegreen1979Minimax}, this problem was formulated to obtain a best underestimating approximation $\bm{A}\bm{x}$ for $\bm{p}$  with respect to the Chebyshev norm. A direct closed-form solution to the problem was derived within the framework of the minimax algebra theory developed there. 

Suppose that $\bm{g},\bm{h}\in\mathbb{X}^{n}$ are given vectors such that $\bm{g}\leq\bm{h}$ are lower and upper boundary constraints imposed on $\bm{x}$. We now consider a problem
\begin{equation}
\begin{aligned}
&
\text{minimize}
&&
\bm{p}^{-}\bm{A}\bm{x}\oplus(\bm{A}\bm{x})^{-}\bm{p},
\\
&
\text{subject to}
&&
\bm{g}
\leq
\bm{x}
\leq
\bm{h},
\end{aligned}
\label{P-pAxAxpgxh}
\end{equation}
which yields a best approximate solution to the equation $\bm{A}\bm{x}=\bm{p}$ under the boundary constraints. This constrained optimization problem was solved in \cite{Zimmermann1984Some} via a finite polynomial threshold-type algorithm.

In the same context of solving linear equations, an unconstrained version of problem \eqref{P-pAxAxpgxh} in the form
\begin{equation*}
\begin{aligned}
&
\text{minimize}
&&
\bm{p}^{-}\bm{A}\bm{x}\oplus(\bm{A}\bm{x})^{-}\bm{p},
\end{aligned}
\label{P-pAxAxp}
\end{equation*}
and problem \eqref{P-AxpAxp} were examined in \cite{Krivulin2005Onsolution,Krivulin2009Methods,Krivulin2013Solution}. A solution approach was proposed, which involves the evaluation of sharp bounds on the objective function. Using this approach, direct solutions to the problems were obtained in a compact vector form.

We now assume that one more vector $\bm{q}\in\mathbb{X}^{m}$ is given and consider the unconstrained problem 
\begin{equation}
\begin{aligned}
&
\text{minimize}
&&
\bm{q}^{-}\bm{A}\bm{x}\oplus(\bm{A}\bm{x})^{-}\bm{p}.
\end{aligned}
\label{P-qAxAxp}
\end{equation}

Setting $\bm{A}=\bm{I}$ gives an unconstrained problem
\begin{equation}
\begin{aligned}
&
\text{minimize}
&&
\bm{q}^{-}\bm{x}\oplus\bm{x}^{-}\bm{p}.
\end{aligned}
\label{P-qxxp}
\end{equation}

Problem \eqref{P-qxxp} together with two constrained problems, one having inequality constraints,
\begin{equation*}
\begin{aligned}
&
\text{minimize}
&&
\bm{q}^{-}\bm{x}\oplus\bm{x}^{-}\bm{p},
\\
&
\text{subject to}
&&
\bm{A}\bm{x}
\leq
\bm{x},
\end{aligned}
\end{equation*}
and the other with equality constraints,
\begin{equation*}
\begin{aligned}
&
\text{minimize}
&&
\bm{q}^{-}\bm{x}\oplus\bm{x}^{-}\bm{p},
\\
&
\text{subject to}
&&
\bm{A}\bm{x}
=
\bm{x},
\end{aligned}
\end{equation*}
appeared in solving multidimensional single facility location problems with the Chebyshev distance.

These problems were investigated in \cite{Krivulin2011Analgebraic,Krivulin2012Anew}, where the application of the above mentioned approach provided exact solutions to the problems. The solution obtained for problem \eqref{P-qxxp} was complete.

Below, we consider extended problems that combine the objective functions at \eqref{P-qAxAxp} and \eqref{P-qxxp} with the left or both boundary constraints at \eqref{P-pAxAxpgxh}. 

\section{Constrained Problems}
\label{S-CP}

We are now in a position to present our main results on the solution to tropical optimization problems with boundary constraints. The exact solutions to be given are based on the use and further development of the techniques offered in \cite{Krivulin2005Onsolution,Krivulin2009Methods,Krivulin2013Solution}.

\subsection{Lower and Upper Boundary Constraints}

We start with a complete solution to the following problem: given vectors $\bm{p},\bm{q},\bm{g},\bm{h}\in\mathbb{X}^{n}$, find regular vectors $\bm{x}\in\mathbb{X}^{n}$ that
\begin{equation}
\begin{aligned}
&
\text{minimize}
&&
\bm{q}^{-}\bm{x}\oplus\bm{x}^{-}\bm{p},
\\
&
\text{subject to}
&&
\bm{g}
\leq
\bm{x}
\leq
\bm{h}.
\end{aligned}
\label{P-qxxpgxh}
\end{equation}

The next result offers a straightforward solution to the problem under fairly general assumptions.  
\begin{theorem}
\label{T-qxxpgxh}
Let $\bm{p}$ and $\bm{q}$ be regular vectors, $\bm{g}$ and $\bm{h}$ be vectors such that $\bm{g}\leq\bm{h}$, and $\Delta=\sqrt{\bm{q}^{-}\bm{p}}$. Denote
\begin{equation}
\mu
=
\Delta\oplus\bm{q}^{-}\bm{g}\oplus\bm{h}^{-}\bm{p}.
\label{E-muDeltaqqghp}
\end{equation}

Then the minimum in problem \eqref{P-qxxpgxh} is equal to $\mu$ and attained if and only if
\begin{equation}
\mu^{-1}\bm{p}\oplus\bm{g}
\leq
\bm{x}
\leq
(\mu^{-1}\bm{q}^{-}\oplus\bm{h}^{-})^{-}.
\label{I-mupxmuq}
\end{equation}
\end{theorem}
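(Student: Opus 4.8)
The plan is to follow the two-stage scheme announced in the introduction: first establish a sharp lower bound on the objective $f(\bm{x})=\bm{q}^{-}\bm{x}\oplus\bm{x}^{-}\bm{p}$ over the feasible set, and then describe exactly those feasible $\bm{x}$ that attain it. I expect the value $\mu$ of \eqref{E-muDeltaqqghp} to be this bound, and I would argue it separately ``from below'' (as an unavoidable estimate) and ``from above'' (by exhibiting a whole interval of minimizers).

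For the lower bound I would combine an unconstrained estimate with the two constraints. Writing $(\bm{q}^{-}\bm{x})(\bm{x}^{-}\bm{p})=\bm{q}^{-}(\bm{x}\bm{x}^{-})\bm{p}$ and using the matrix inequality $\bm{x}\bm{x}^{-}\geq\bm{I}$ valid for regular $\bm{x}$, one gets $(\bm{q}^{-}\bm{x})(\bm{x}^{-}\bm{p})\geq\bm{q}^{-}\bm{p}$. Because addition is idempotent, any two scalars obey $(a\oplus b)^{2}=a^{2}\oplus ab\oplus b^{2}\geq ab$, hence $a\oplus b\geq\sqrt{ab}$ by radicability and monotonicity; applied to the two summands this yields $f(\bm{x})\geq\sqrt{\bm{q}^{-}\bm{p}}=\Delta$. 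The boundary constraints contribute the remaining two terms: from $\bm{g}\leq\bm{x}$ and monotonicity $\bm{q}^{-}\bm{x}\geq\bm{q}^{-}\bm{g}$, whereas $\bm{x}\leq\bm{h}$ gives $\bm{x}^{-}\geq\bm{h}^{-}$ and so $\bm{x}^{-}\bm{p}\geq\bm{h}^{-}\bm{p}$. Forming the idempotent sum of the three estimates produces $f(\bm{x})\geq\Delta\oplus\bm{q}^{-}\bm{g}\oplus\bm{h}^{-}\bm{p}=\mu$.

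Next I would characterize the minimizers. Since $f(\bm{x})\geq\mu$ holds throughout the feasible set, the equality $f(\bm{x})=\mu$ is equivalent to the pair of scalar inequalities $\bm{q}^{-}\bm{x}\leq\mu$ and $\bm{x}^{-}\bm{p}\leq\mu$. The first is an instance of \eqref{I-Axp} with the row matrix $\bm{q}^{-}$, so Lemma~\ref{L-xpA} converts it into $\bm{x}\leq(\mu^{-1}\bm{q}^{-})^{-}=\mu\bm{q}$; the second unfolds component-wise to $\bm{x}\geq\mu^{-1}\bm{p}$. Intersecting these with $\bm{g}\leq\bm{x}\leq\bm{h}$, the two lower bounds merge into the idempotent sum $\mu^{-1}\bm{p}\oplus\bm{g}$, and the two upper bounds $\mu\bm{q}$ and $\bm{h}$ merge into $(\mu^{-1}\bm{q}^{-}\oplus\bm{h}^{-})^{-}$ by applying the order-reversing conjugation (so that $\bm{x}\leq\mu\bm{q}$ and $\bm{x}\leq\bm{h}$ together read $\bm{x}^{-}\geq\mu^{-1}\bm{q}^{-}\oplus\bm{h}^{-}$). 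This is precisely the double inequality \eqref{I-mupxmuq}.

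The step I expect to carry the real weight is verifying that this solution set is nonempty, which is what upgrades $\mu$ from a mere lower bound to the attained minimum. Concretely, I must check $\mu^{-1}\bm{p}\oplus\bm{g}\leq(\mu^{-1}\bm{q}^{-}\oplus\bm{h}^{-})^{-}$, which reduces to the four component-wise comparisons $\mu^{-1}\bm{p}\leq\mu\bm{q}$, $\mu^{-1}\bm{p}\leq\bm{h}$, $\bm{g}\leq\mu\bm{q}$, and $\bm{g}\leq\bm{h}$. Each follows by feeding the defining inequalities $\mu\geq\Delta$, $\mu\geq\bm{h}^{-}\bm{p}$, and $\mu\geq\bm{q}^{-}\bm{g}$ back into the argument: for instance $\mu\geq\Delta$ gives $\mu^{2}\geq\bm{q}^{-}\bm{p}\geq q_{i}^{-1}p_{i}$ and hence $\mu^{-1}p_{i}\leq\mu q_{i}$, while $\mu\geq\bm{h}^{-}\bm{p}$ yields $h_{i}^{-1}p_{i}\leq\mu$ and $\mu\geq\bm{q}^{-}\bm{g}$ yields $q_{i}^{-1}g_{i}\leq\mu$; the last comparison is the hypothesis $\bm{g}\leq\bm{h}$. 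This closes the loop, confirms the interval \eqref{I-mupxmuq} is occupied, and thereby shows the bound $\mu$ is sharp.
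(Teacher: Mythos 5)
Your proposal is correct and follows the paper's two-stage scheme (sharp lower bound on the objective, then characterization of the minimizers via Lemma~\ref{L-xpA}), but it differs from the paper's proof in two details worth recording. First, to obtain the unconstrained bound $\Delta$, the paper applies Lemma~\ref{L-xpA} to convert $r\geq\bm{q}^{-}\bm{x}$ into $\bm{x}\leq r\bm{q}$, substitutes this into $r\geq\bm{x}^{-}\bm{p}$ to get $r\geq r^{-1}\bm{q}^{-}\bm{p}$, and concludes $r\geq\Delta$; you instead use the matrix inequality $\bm{x}\bm{x}^{-}\geq\bm{I}$ together with the scalar estimate $a\oplus b\geq\sqrt{ab}$, an equally valid and arguably more symmetric route to the same bound. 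Second, and more substantively, you explicitly verify that the interval \eqref{I-mupxmuq} is nonempty by reducing $\mu^{-1}\bm{p}\oplus\bm{g}\leq(\mu^{-1}\bm{q}^{-}\oplus\bm{h}^{-})^{-}$ to four component-wise comparisons and deriving each from $\mu\geq\Delta$, $\mu\geq\bm{q}^{-}\bm{g}$, $\mu\geq\bm{h}^{-}\bm{p}$, and $\bm{g}\leq\bm{h}$. The paper omits this step entirely, yet it is exactly what is needed to upgrade $\mu$ from a lower bound to an attained minimum: the equivalence of $\bm{q}^{-}\bm{x}\oplus\bm{x}^{-}\bm{p}=\mu$ with \eqref{I-mupxmuq} would be vacuous if the solution set were empty. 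In this respect your proof is more complete than the paper's. The one caveat, shared equally by the paper, is the implicit assumption that $\bm{h}$ is regular: if some $h_{i}=\mathbb{0}$ then no regular vector satisfies $\bm{x}\leq\bm{h}$, your comparison $\mu^{-1}p_{i}\leq h_{i}$ fails, and the statement itself becomes vacuous.
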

\begin{proof}
Consider the objective function in the problem and show that $\mu$ is its lower bound. Take an arbitrary regular $\bm{x}$ that satisfies the constraints and examine
$$
r
=
\bm{q}^{-}\bm{x}\oplus\bm{x}^{-}\bm{p}.
$$

From the equality, we have two inequalities
$$
r
\geq
\bm{q}^{-}\bm{x},
\qquad
r
\geq
\bm{x}^{-}\bm{p}.
$$

The first inequality and the left boundary constraint provide a lower bound $r\geq\bm{q}^{-}\bm{x}\geq\bm{q}^{-}\bm{g}$.

Due to Lemma~\ref{L-xpA}, the first inequality is equivalent to the inequality $\bm{x}\leq r\bm{q}$. The substitution into the second inequality gives $r\geq\bm{x}^{-}\bm{p}\geq r^{-1}\bm{q}^{-}\bm{p}$, which yields another lower bound $r\geq\sqrt{\bm{q}^{-}\bm{p}}=\Delta$.

Finally, the second inequality and the right boundary constraint lead to
$r\geq\bm{x}^{-}\bm{p}\geq\bm{h}^{-}\bm{p}$.

By combining the bounds, we obtain
$$
r
\geq
\Delta
\oplus
\bm{q}^{-}\bm{g}
\oplus
\bm{h}^{-}\bm{p}
=
\mu.
$$

To find all solutions to the problem, we examine the equation
$$
\bm{q}^{-}\bm{x}\oplus\bm{x}^{-}\bm{p}
=
\mu.
$$

Since $\mu$ is a lower bound, the equation has the same regular solutions as the inequality
$$
\bm{q}^{-}\bm{x}\oplus\bm{x}^{-}\bm{p}
\leq
\mu,
$$
which is itself equivalent to the pair of inequalities
$$
\bm{q}^{-}\bm{x}
\leq
\mu,
\qquad
\bm{x}^{-}\bm{p}
\leq
\mu.
$$

The application of Lemma~\ref{L-xpA} to the inequalities leads to the solutions
$$
\bm{x}
\leq
\mu\bm{q},
\qquad
\bm{x}
\geq
\mu^{-1}\bm{p}.
$$

By coupling these solutions with the boundary constraints, we arrive at solution \eqref{I-mupxmuq}. 
\end{proof}

It is easy to see from the proof of the theorem that, if the left, right, or both boundaries are not specified in problem \eqref{P-qxxpgxh}, the solutions \eqref{E-muDeltaqqghp} and \eqref{I-mupxmuq} take reduced forms. Specifically, we have
\begin{gather*}
\mu
=
\Delta\oplus\bm{q}^{-}\bm{g},
\\
\mu^{-1}\bm{p}\oplus\bm{g}
\leq
\bm{x}
\leq
\mu\bm{q},
\end{gather*}
for the case when only the constraint $\bm{x}\geq\bm{g}$ is given,
\begin{gather*}
\mu
=
\Delta\oplus\bm{h}^{-}\bm{p},
\\
\mu^{-1}\bm{p}
\leq
\bm{x}
\leq
(\mu^{-1}\bm{q}^{-}\oplus\bm{h}^{-})^{-},
\end{gather*} 
for the constraint $\bm{x}\leq\bm{h}$, and
\begin{gather*}
\mu
=
\Delta,
\\
\mu^{-1}\bm{p}
\leq
\bm{x}
\leq
\mu\bm{q},
\end{gather*} 
if no boundary constraints are imposed.

Note that the last solution coincides with the result for this case, which was obtained in \cite{Krivulin2012Anew}.

\subsection{A One-Sided Boundary Constraint}

Given a matrix $\bm{A}\in\mathbb{X}^{m\times n}$ together with vectors $\bm{p},\bm{q}\in\mathbb{X}^{m}$ and $\bm{g}\in\mathbb{X}^{n}$, consider the problem of finding regular vectors $\bm{x}\in\mathbb{X}^{n}$ that
\begin{equation}
\begin{aligned}
&
\text{minimize}
&&
\bm{q}^{-}\bm{A}\bm{x}\oplus(\bm{A}\bm{x})^{-}\bm{p},
\\
&
\text{subject to}
&&
\bm{x}
\geq
\bm{g},
\end{aligned}
\label{P-qAxAxpxg}
\end{equation}

A direct solution to the problem can be derived using similar arguments as in the previous theorem.
\begin{theorem}
\label{T-qAxAxpxg}
Suppose that $\bm{A}$ is a regular matrix, $\bm{p}$ and $\bm{q}$ are regular vectors, $\bm{g}$ is an arbitrary vector, and $\Delta=\sqrt{(\bm{A}(\bm{q}^{-}\bm{A})^{-})^{-}\bm{p}}$. Denote
\begin{equation*}
\mu
=
\Delta\oplus\bm{q}^{-}\bm{A}\bm{g}.
\label{E-muDeltaqAg}
\end{equation*}

Then the minimum in problem \eqref{P-qAxAxpxg} is equal to $\mu$ and attained at the vector
\begin{equation*}
\bm{x}
=
\mu(\bm{q}^{-}\bm{A})^{-}.
\label{E-xmuqA}
\end{equation*}
\end{theorem}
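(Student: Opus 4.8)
The plan is to follow the two-stage scheme of the preceding theorem: first derive $\mu$ as a lower bound on the objective over all feasible regular $\bm{x}$, and then exhibit the vector $\bm{x}=\mu(\bm{q}^{-}\bm{A})^{-}$ as a feasible point attaining it. Writing $r=\bm{q}^{-}\bm{A}\bm{x}\oplus(\bm{A}\bm{x})^{-}\bm{p}$, I immediately obtain the two scalar inequalities $r\geq\bm{q}^{-}\bm{A}\bm{x}$ and $r\geq(\bm{A}\bm{x})^{-}\bm{p}$. The constraint $\bm{x}\geq\bm{g}$ together with monotonicity of multiplication gives $r\geq\bm{q}^{-}\bm{A}\bm{x}\geq\bm{q}^{-}\bm{A}\bm{g}$, which accounts for the second summand of $\mu$.

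For the $\Delta$ term, I would apply Lemma~\ref{L-xpA} to the first inequality $\bm{q}^{-}\bm{A}\bm{x}\leq r$, viewing $\bm{q}^{-}\bm{A}$ as the coefficient matrix; this rewrites it equivalently as $\bm{x}\leq r(\bm{q}^{-}\bm{A})^{-}$. Left-multiplying by $\bm{A}$ and then conjugating (which reverses the inequality on regular vectors) turns the second inequality into $r\geq(\bm{A}\bm{x})^{-}\bm{p}\geq r^{-1}(\bm{A}(\bm{q}^{-}\bm{A})^{-})^{-}\bm{p}=r^{-1}\Delta^{2}$, so that $r\geq\Delta$. Combining the two bounds yields $r\geq\Delta\oplus\bm{q}^{-}\bm{A}\bm{g}=\mu$.

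It then remains to check the proposed vector. Feasibility $\mu(\bm{q}^{-}\bm{A})^{-}\geq\bm{g}$ follows by invoking Lemma~\ref{L-xpA} once more, now applied to $\bm{q}^{-}\bm{A}\bm{g}\leq\mu$ (valid since $\bm{q}^{-}\bm{A}\bm{g}$ is a summand of $\mu$), which is equivalent to $\bm{g}\leq\mu(\bm{q}^{-}\bm{A})^{-}$. For the objective value, substituting $\bm{x}=\mu(\bm{q}^{-}\bm{A})^{-}$ and using the row-vector analog $\bm{u}\bm{u}^{-}=\mathbb{1}$ of the identity $\bm{x}^{-}\bm{x}=\mathbb{1}$, with $\bm{u}=\bm{q}^{-}\bm{A}$, collapses the first term to $\bm{q}^{-}\bm{A}\bm{x}=\mu$; the second term becomes $(\bm{A}\bm{x})^{-}\bm{p}=\mu^{-1}(\bm{A}(\bm{q}^{-}\bm{A})^{-})^{-}\bm{p}=\mu^{-1}\Delta^{2}$. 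Since $\mu\geq\Delta$ forces $\mu^{-1}\Delta^{2}\leq\Delta\leq\mu$, the objective equals $\mu\oplus\mu^{-1}\Delta^{2}=\mu$, matching the lower bound.

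The routine ingredients are the conjugation rules for scalar multiples and products, namely $(c\bm{M})^{-}=c^{-1}\bm{M}^{-}$ and the reversal $\bm{u}\leq\bm{v}\Leftrightarrow\bm{u}^{-}\geq\bm{v}^{-}$ on regular quantities. The step I expect to demand the most care is verifying that all regularity hypotheses needed to invoke Lemma~\ref{L-xpA} and the conjugation identities are genuinely met — in particular that $\bm{q}^{-}\bm{A}$ is a regular row vector (ensured because $\bm{A}$ is regular and $\bm{q}$ is regular) and that $\bm{A}\bm{x}$ remains regular throughout, so that the inequality reversals under conjugate transposition are legitimate.
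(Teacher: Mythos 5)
Your proposal is correct and follows essentially the same route as the paper: the same two scalar bounds $r\geq\bm{q}^{-}\bm{A}\bm{g}$ and $r\geq\Delta$ (the latter via Lemma~\ref{L-xpA} applied to $\bm{q}^{-}\bm{A}\bm{x}\leq r$), followed by verifying that $\bm{x}=\mu(\bm{q}^{-}\bm{A})^{-}$ is feasible and makes the objective equal to $\mu\oplus\mu^{-1}\Delta^{2}=\mu$. The only cosmetic difference is that you establish feasibility by invoking Lemma~\ref{L-xpA} on $\bm{q}^{-}\bm{A}\bm{g}\leq\mu$, whereas the paper carries out the equivalent inline computation $\mu(\bm{q}^{-}\bm{A})^{-}\geq(\bm{q}^{-}\bm{A})^{-}\bm{q}^{-}\bm{A}\bm{g}\geq\bm{g}$.
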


\begin{proof}
Take a regular $\bm{x}\geq\bm{g}$ and consider the value
$$
r
=
\bm{q}^{-}\bm{A}\bm{x}\oplus(\bm{A}\bm{x})^{-}\bm{p}.
$$

We have two inequalities
$$
r
\geq
\bm{q}^{-}\bm{A}\bm{x},
\qquad
r
\geq
(\bm{A}\bm{x})^{-}\bm{p}.
$$

By combining the first inequality with the constraint, we obtain one bound $r\geq\bm{q}^{-}\bm{A}\bm{x}\geq\bm{q}^{-}\bm{A}\bm{g}$.

Furthermore, we apply Lemma~\ref{L-xpA} to solve the first inequality in the form $\bm{x}\leq r(\bm{q}^{-}\bm{A})^{-}$. The solution taken together with the second inequality give
$r\geq(\bm{A}\bm{x})^{-}\bm{p}\geq r^{-1}(\bm{A}(\bm{q}^{-}\bm{A})^{-})^{-}\bm{p}$, which leads to another bound $r\geq\sqrt{(\bm{A}(\bm{q}^{-}\bm{A})^{-})^{-}\bm{p}}=\Delta$.

Both bounds can be written together as
$$
r
\geq
\Delta
\oplus
\bm{q}^{-}\bm{A}\bm{g}
=
\mu.
$$

We now verify that that the minimum value $\mu$ is attained at the vector $\bm{x}=\mu(\bm{q}^{-}\bm{A})^{-}\geq\bm{g}$. First, we ascertain that
$$
\bm{x}
=
(\Delta\oplus\bm{q}^{-}\bm{A}\bm{g})(\bm{q}^{-}\bm{A})^{-}
\geq
(\bm{q}^{-}\bm{A})^{-}\bm{q}^{-}\bm{A}\bm{g}
\geq
\bm{g}.
$$

Finally, we substitute this vector $\bm{x}$ into the objective function. Considering that $\Delta\leq\mu$, we obtain
\begin{multline*}
\bm{q}^{-}\bm{A}\bm{x}\oplus(\bm{A}\bm{x})^{-}\bm{p}
\\
=
\mu\bm{q}^{-}\bm{A}(\bm{q}^{-}\bm{A})^{-}\oplus\mu^{-1}(\bm{A}(\bm{q}^{-}\bm{A})^{-})^{-}\bm{p}
\\
=
\mu\oplus\mu^{-1}\Delta^{2}
=
\mu.
{\qedhere}
\end{multline*}
\end{proof}

Suppose that no lower bound is defined in the problem, and thus we can put $\bm{g}=\mathbb{0}$. In this case, the solution offered by the theorem becomes the same as that derived in \cite{Krivulin2012Anew}.

\section{Applications and Examples} 
\label{S-AE}

In this section we discuss applications of the results obtained and give numerical examples. Below, we take $\mathbb{R}_{\max,+}$ as the carrier semifield and thus apply the results under the assumption that $\mathbb{X}=\mathbb{R}_{\max,+}$.

\subsection{A Constrained Location Problem}

Consider the following problem, which arises in the solution of single facility location problems in $\mathbb{R}^{n}$ with the Chebyshev norm \cite{Krivulin2011Analgebraic,Krivulin2012Anew}. Given points $\bm{r}$, $\bm{s}$, $\bm{g}$, and $\bm{h}$, locate a new point $\bm{x}$ that minimizes the maximum of the Chebyshev distances from $\bm{x}$ to $\bm{r}$ and to $\bm{s}$, and satisfies the boundary constraints $\bm{g}\leq\bm{x}\leq\bm{h}$.

To solve the location problem, we first apply \eqref{E-rhoxy} to represent the maximum distance as follows:
\begin{multline*}
\rho(\bm{x},\bm{r})\oplus\rho(\bm{x},\bm{s})
=
\bm{r}^{-}\bm{x}\oplus\bm{x}^{-}\bm{r}
\oplus
\bm{s}^{-}\bm{x}\oplus\bm{x}^{-}\bm{s}
\\
=
(\bm{r}^{-}\oplus\bm{s}^{-})\bm{x}\oplus\bm{x}^{-}(\bm{r}\oplus\bm{s}),
\end{multline*}
and then formulate the problem in the form
\begin{equation}
\begin{aligned}
&
\text{minimize}
&&
(\bm{r}^{-}\oplus\bm{s}^{-})\bm{x}\oplus\bm{x}^{-}(\bm{r}\oplus\bm{s}),
\\
&
\text{subject to}
&&
\bm{g}
\leq
\bm{x}
\leq
\bm{h},
\end{aligned}
\label{P-rsxxrsgxh}
\end{equation}

It remains to reduce the problem to \eqref{P-qxxpgxh} by substituting $\bm{p}=\bm{r}\oplus\bm{s}$ and $\bm{q}^{-}=\bm{r}^{-}\oplus\bm{s}^{-}$ and then apply Theorem~\ref{T-qxxpgxh} to obtain a complete direct solution.
\begin{lemma}
Suppose that $\bm{r}$ and $\bm{s}$ are regular vectors, $\bm{g}$ and $\bm{h}$ are vectors such that $\bm{g}\leq\bm{h}$, and $\Delta=\sqrt{(\bm{r}^{-}\oplus\bm{s}^{-})(\bm{r}\oplus\bm{s})}$. Denote
\begin{equation*}
\mu
=
\Delta\oplus(\bm{r}^{-}\oplus\bm{s}^{-})\bm{g}\oplus\bm{h}^{-}(\bm{r}\oplus\bm{s}).
\label{E-mu1}
\end{equation*}

Then the minimum distance in problem \eqref{P-rsxxrsgxh} is equal to $\mu$ and attained if and only if
\begin{equation*}
\mu^{-1}(\bm{r}\oplus\bm{s})\oplus\bm{g}
\leq
\bm{x}
\leq
(\mu^{-1}(\bm{r}^{-}\oplus\bm{s}^{-})\oplus\bm{h}^{-})^{-}.
\end{equation*}
\end{lemma}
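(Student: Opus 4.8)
The plan is to recognize problem \eqref{P-rsxxrsgxh} as nothing more than an instance of problem \eqref{P-qxxpgxh}, and then to invoke Theorem~\ref{T-qxxpgxh} essentially verbatim. Indeed, the objective function $(\bm{r}^{-}\oplus\bm{s}^{-})\bm{x}\oplus\bm{x}^{-}(\bm{r}\oplus\bm{s})$ coincides with $\bm{q}^{-}\bm{x}\oplus\bm{x}^{-}\bm{p}$ once we set $\bm{p}=\bm{r}\oplus\bm{s}$ and $\bm{q}^{-}=\bm{r}^{-}\oplus\bm{s}^{-}$, while the constraint $\bm{g}\leq\bm{x}\leq\bm{h}$ is already in the required two-sided form. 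No rederivation of bounds will therefore be necessary: the entire content of the lemma should follow by substituting these two expressions into the formulas \eqref{E-muDeltaqqghp} and \eqref{I-mupxmuq}.

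Before applying the theorem I would check that its hypotheses survive the substitution. The vector $\bm{p}=\bm{r}\oplus\bm{s}$ is regular, since adding the regular vector $\bm{r}$ component-wise keeps every entry distinct from $\mathbb{0}$ (each component dominates a nonzero term and $\mathbb{0}$ is the least element of the order). The regularity condition on $\bm{q}$ deserves a brief remark, since $\bm{q}$ is prescribed only through $\bm{q}^{-}$. Throughout the statement and proof of Theorem~\ref{T-qxxpgxh} the vector $\bm{q}$ enters solely via its conjugate $\bm{q}^{-}$, and the assumption that $\bm{q}$ is regular is used only to guarantee that $\bm{q}^{-}$ is a zero-free row vector, so that Lemma~\ref{L-xpA} applies to the inequality $\bm{q}^{-}\bm{x}\leq r$. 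Because $\bm{r}$ and $\bm{s}$ are regular, both $\bm{r}^{-}$ and $\bm{s}^{-}$ have no $\mathbb{0}$ entries, so $\bm{q}^{-}=\bm{r}^{-}\oplus\bm{s}^{-}$ is zero-free as well; equivalently, $\bm{q}^{-}$ is the conjugate of the regular vector obtained by taking component-wise minima of $\bm{r}$ and $\bm{s}$, which exists because the semifield is linearly ordered. The remaining hypothesis $\bm{g}\leq\bm{h}$ is assumed.

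With the hypotheses secured, I would simply read off the formulas. The product $\bm{q}^{-}\bm{p}$ becomes $(\bm{r}^{-}\oplus\bm{s}^{-})(\bm{r}\oplus\bm{s})$, giving $\Delta=\sqrt{(\bm{r}^{-}\oplus\bm{s}^{-})(\bm{r}\oplus\bm{s})}$; likewise $\bm{q}^{-}\bm{g}$ turns into $(\bm{r}^{-}\oplus\bm{s}^{-})\bm{g}$ and $\bm{h}^{-}\bm{p}$ into $\bm{h}^{-}(\bm{r}\oplus\bm{s})$, reproducing the stated value of $\mu$, and the same replacements in the lower and upper bounds of \eqref{I-mupxmuq} yield exactly the claimed inequalities for $\bm{x}$. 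Theorem~\ref{T-qxxpgxh} then certifies both that $\mu$ is the minimum and that these inequalities describe precisely the optimal set. The only genuine obstacle is the point flagged in the second paragraph: ensuring that feeding a conjugate $\bm{q}^{-}=\bm{r}^{-}\oplus\bm{s}^{-}$ into a result phrased in terms of a primal $\bm{q}$ is legitimate. I expect this to be harmless, since a regular $\bm{q}$ with the prescribed conjugate provably exists, so the whole argument reduces to routine verification rather than any new estimate.
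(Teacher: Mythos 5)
Your proposal is correct and is essentially the paper's own proof: the paper likewise solves the lemma by substituting $\bm{p}=\bm{r}\oplus\bm{s}$ and $\bm{q}^{-}=\bm{r}^{-}\oplus\bm{s}^{-}$ into Theorem~\ref{T-qxxpgxh} and reading off formulas \eqref{E-muDeltaqqghp} and \eqref{I-mupxmuq}. Your additional check that $\bm{q}^{-}=\bm{r}^{-}\oplus\bm{s}^{-}$ is the conjugate of a genuine regular vector (the component-wise minimum of $\bm{r}$ and $\bm{s}$) is a legitimate refinement the paper leaves implicit, but it does not change the route.
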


We illustrate this result with a location problem with the given points
$$
\bm{r}
=
\left(
\begin{array}{r}
-3
\\
1
\\
1
\end{array}
\right),
\qquad
\bm{s}
=
\left(
\begin{array}{r}
1
\\
3
\\
-2
\end{array}
\right),
$$
and the boundary points
$$
\bm{g}
=
\left(
\begin{array}{c}
0
\\
0
\\
0
\end{array}
\right),
\qquad
\bm{h}
=
\left(
\begin{array}{c}
1
\\
1
\\
1
\end{array}
\right).
$$

First, we find vectors
$$
\bm{r}\oplus\bm{s}
=
\left(
\begin{array}{c}
1
\\
3
\\
1
\end{array}
\right),
\qquad
\bm{r}^{-}\oplus\bm{s}^{-}
=
\left(
\begin{array}{ccc}
3
&
-1
&
2
\end{array}
\right),
$$
and then calculate
$$
\Delta
=
2,
\quad
(\bm{r}^{-}\oplus\bm{s}^{-})\bm{g}
=
3,
\quad
\bm{h}^{-}(\bm{r}\oplus\bm{s})
=
2.
$$

Since $\mu=3$, we finally have the solution in the form
$$
\left(
\begin{array}{r}
0
\\
0
\\
0
\end{array}
\right)
\leq
\bm{x}
\leq
\left(
\begin{array}{r}
0
\\
1
\\
1
\end{array}
\right).
$$

\subsection{A Constrained Approximation Problem}

Let a matrix $\bm{A}$ and vectors $\bm{p}$ and $\bm{g}$ of appropriate size be given over $\mathbb{R}_{\max,+}$. Suppose that one has to determine a best approximation of $\bm{p}$ by $\bm{A}\bm{x}$ in terms of the Chebyshev norm $\rho(\bm{A}\bm{x},\bm{p})$, subject to the boundary constraints $\bm{x}\geq\bm{g}$. This problem has natural interpretations in many areas, including real-world problems in job scheduling (see, e.g., \cite{Cuninghamegreen1976Projections,Cuninghamegreen1979Minimax,Zimmermann1984Some}).

With definition \eqref{E-rhoxy}, we immediately arrive at the problem to find regular vectors $\bm{x}$ that
\begin{equation}
\begin{aligned}
&
\text{minimize}
&&
\bm{p}^{-}\bm{A}\bm{x}\oplus(\bm{A}\bm{x})^{-}\bm{p},
\\
&
\text{subject to}
&&
\bm{x}
\geq
\bm{g}.
\end{aligned}
\label{P-pAxApxg}
\end{equation}

By the substitution $\bm{q}=\bm{p}$ in Theorem~\ref{T-qAxAxpxg}, we get the following solution to the approximation problem.
\begin{lemma}
\label{T-pAxAxpxg}
Suppose that $\bm{A}$ is a regular matrix, $\bm{p}$ is a regular vector, $\bm{g}$ is an arbitrary vector, and $\Delta=\sqrt{(\bm{A}(\bm{p}^{-}\bm{A})^{-})^{-}\bm{p}}$. Denote
\begin{equation*}
\mu
=
\Delta\oplus\bm{p}^{-}\bm{A}\bm{g}.
\label{E-muDeltapAg}
\end{equation*}

Then the least approximation error in problem \eqref{P-pAxApxg} is equal to $\mu$ and attained at the vector
\begin{equation*}
\bm{x}
=
\mu(\bm{p}^{-}\bm{A})^{-}.
\label{E-xmupA}
\end{equation*}
\end{lemma}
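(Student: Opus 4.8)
The plan is to obtain this lemma as an immediate specialization of Theorem~\ref{T-qAxAxpxg}. First I would observe that the objective function in problem~\eqref{P-pAxApxg}, namely $\bm{p}^{-}\bm{A}\bm{x}\oplus(\bm{A}\bm{x})^{-}\bm{p}$, is precisely the objective in problem~\eqref{P-qAxAxpxg} under the substitution $\bm{q}=\bm{p}$, while the constraint $\bm{x}\geq\bm{g}$ is identical in both problems. Hence problem~\eqref{P-pAxApxg} is exactly the instance of problem~\eqref{P-qAxAxpxg} with $\bm{q}$ replaced by $\bm{p}$.

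Next I would check that the hypotheses of Theorem~\ref{T-qAxAxpxg} are met. The theorem requires $\bm{A}$ to be regular, $\bm{p}$ and $\bm{q}$ to be regular, and $\bm{g}$ to be arbitrary. Under the assumptions of the present lemma, $\bm{A}$ is regular, $\bm{p}$ is regular, and $\bm{g}$ is arbitrary; since here $\bm{q}=\bm{p}$, the vector $\bm{q}$ inherits regularity from $\bm{p}$. Thus all hypotheses hold and the theorem applies.

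It then remains to substitute $\bm{q}=\bm{p}$ into the conclusions. The quantity $\Delta=\sqrt{(\bm{A}(\bm{q}^{-}\bm{A})^{-})^{-}\bm{p}}$ becomes $\sqrt{(\bm{A}(\bm{p}^{-}\bm{A})^{-})^{-}\bm{p}}$, the value $\mu=\Delta\oplus\bm{q}^{-}\bm{A}\bm{g}$ becomes $\Delta\oplus\bm{p}^{-}\bm{A}\bm{g}$, and the minimizing vector $\bm{x}=\mu(\bm{q}^{-}\bm{A})^{-}$ becomes $\mu(\bm{p}^{-}\bm{A})^{-}$, exactly the expressions asserted. Since the argument is a purely formal specialization, I do not anticipate any genuine obstacle; the only point worth a moment's care is confirming that the regularity hypotheses survive the substitution, which they do, so no fresh derivation of the bounds or of the attaining vector is required.
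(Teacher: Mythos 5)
Your proposal is correct and follows exactly the paper's own route: the paper derives this lemma as an immediate specialization of Theorem~\ref{T-qAxAxpxg} obtained by the substitution $\bm{q}=\bm{p}$, just as you do. Your additional check that the regularity hypotheses survive the substitution is the only step the paper leaves implicit, and it is a sound inclusion.
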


We now consider an approximation problem under the assumption that $m=n=3$ and
\begin{gather*}
\bm{A}
=
\left(
\begin{array}{crc}
1 & -1 & 1
\\
3 & 1 & 0
\\
0 & 0 & 2
\end{array}
\right),
\qquad
\bm{p}
=
\left(
\begin{array}{c}
3
\\
4
\\
4
\end{array}
\right),
\\
\bm{g}
=
\left(
\begin{array}{c}
2
\\
2
\\
2
\end{array}
\right).
\end{gather*}

To evaluate the approximation error $\mu$, we first calculate the vectors
$$
(\bm{p}^{-}\bm{A})^{-}
=
\left(
\begin{array}{c}
1
\\
3
\\
2
\end{array}
\right),
\qquad
\bm{A}(\bm{p}^{-}\bm{A})^{-}
=
\left(
\begin{array}{c}
3
\\
4
\\
4
\end{array}
\right).
$$

Furthermore, we successively find
$$
\Delta
=
0
=
\mathbb{1},
\qquad
\bm{A}\bm{g}
=
\left(
\begin{array}{c}
3
\\
5
\\
4
\end{array}
\right),
\qquad
\bm{p}^{-}\bm{A}\bm{g}
=
1,
$$
and then arrive at $\mu=1$.

Finally, we obtain the solution to the approximation problem in the form
$$
\bm{x}
=
\left(
\begin{array}{ccc}
2 & 4 & 3
\end{array}
\right)^{T}.
$$

\bibliographystyle{abbrvurl}

\bibliography{Direct_solutions_to_tropical_optimization_problems_with_nonlinear_objective_functions_and_boundary_constraints}

\end{document}